\newcommand{\id}{\mathsf{I}}
\newcommand{\Diag}{\operatorname{Diag}}
\newcommand{\tr}{\mathrm{Tr}}
\newcommand{\clo}{{\operatorname{cl}}}
\newcommand{\ope}{{\operatorname{op}}}
\newcommand\cyr
\renewcommand\rmdefault{wncyr}
\renewcommand\sfdefault{wncyss}
\renewcommand\encodingdefault{OT2}
\DeclareTextFontCommand{\textcyr}{\cyr}
\begin{document}

\title{Enumerating simple paths from connected induced subgraphs}

\author{Pierre-Louis Giscard        \and Paul Rochet} 

\institute{P.L. Giscard \at
              University of York, Department of Computer Sciences \\
              \email{pierre-louis.giscard@york.ac.uk}           %  \\
%             \emph{Present address:} of F. Author  %  if needed
           \and
           P. Rochet (corresponding author)\at
              Universit\'e de Nantes, Laboratoire de Math\'ematiques Jean Leray\\
							\email{paul.rochet@univ-nantes.fr}
}

\date{Received: date / Accepted: date}
% The correct dates will be entered by the editor

\maketitle

\begin{abstract}
We present an exact formula for the ordinary generating series of the simple paths between any two vertices of a graph. Our formula involves the adjacency matrix of the connected induced subgraphs and remains valid on weighted and directed graphs. 
As a particular case, we obtain a relation linking the Hamiltonian paths and cycles of a graph to its dominating connected sets.
\keywords{Directed graph \and self-avoiding walks \and simple cycles \and Hamiltonian paths \and dominating sets \and labeled adjacency matrix}
% \PACS{PACS code1 \and PACS code2 \and more}
% \subclass{MSC code1 \and MSC code2 \and more}
\end{abstract}

\section{Introduction}

Counting simple paths, that is trajectories on a graph that do not visit any vertex more than once, is a problem of fundamental importance in enumerative combinatorics \cite{madras2013self} with numerous applications, e.g. in sociology \cite{Ross1952,Cartwright1956}. Several general purpose methods for counting simple paths have been discovered over the last 60 years, which make use of the inclusion-exclusion principle \cite{bax1993inclusion,bax1994algorithms,Bjorklund2010,karp1982dynamic,Khomenko1972} or variants such as finite-difference sieves \cite{bax1996finite} and recursive expressions involving the adjacency matrix \cite{Alon1997,Harary1971,Perepechko2009,Ross1952}. More rare but also worth mentionning are approaches using different tools such as zeon algebras \cite{schott2011complexity} or immanantal equations \cite{cash2007number}.\\

While some of these theoretical results have been used to propose algorithms for counting simple cycles or paths of fixed length, the problem remains $\#$W[1]-complete and is generally beyond reach of existing techniques on real-world networks. A notable exception is the algorithm of Alon, Yuster and Zwick \cite{Alon1997}, designed to count simple cycles of length up to 7 on undirected graphs. Although extensions up to length 10 and directed graphs are possible in principle, the algorithm is memory intensive and cannot tackle arbitrary cycle lengths. 
Under additional conditions on the graph however, the problem can become manageable, e.g. it is fixed parameter tractable in bounded degree graphs.\\
 
Beyond the hunt for better asymptotic running times on worst case scenarios in problems closely related to simple cycle counting \cite{bjorklund2009counting,vassilevska2009finding}, much effort has also been devoted to the Hamiltonian path problem or its most famous variant, the Travelling Salesman problem. The classical dynamic programming solution of \cite{bellman1962dynamic} and \cite{held1962dynamic}, which runs at time $O(2^N)$ for $N$ the size of the graph, remains the faster known algorithm in the general case. However, some improvement is possible in graphs with bounded maximal degree, see for instance \cite{eppstein2007traveling,gebauer2008number} or \cite{bjorklund2008travelling}. In the latter, the problem is reduced to a search on the dominating connected subgraphs, a simplification that is central in the current article. 
\\

The contribution of this paper is two-fold. First, we present a matrix formulation of the inclusion-exclusion principle that provides a concise expression of the matrix generating series for simple paths and cycles in a labeled graph. This identity conciliates in a single equation the Hamiltonian path matrix expression of \cite{bax1993inclusion} or the number of simple cycles of arbitrary length \cite{Perepechko2009}. Second, we show a simplification of the formula that restricts the inclusion-exclusion process to connected induced subgraphs only. A remarkable consequence is an expression that links the Hamiltonian paths of a graph to its dominating connected sets. 
While the problem of counting simple cycles and paths parametrised by length remains $\#$W[1]-complete, the formulas we obtain form the base of a novel algorithm \cite{giscard2016general} that has proven to be efficient enough to effectively tackle the problem, up to length 20, on real-world networks \cite{giscard2016evaluating,giscard2017kernel}.\\

The paper is organized as follows. The matrix generating series $\mathsf P(z)$ of simple paths in a directed graph $G$ is introduced in Section \ref{sec:gen}, along with its expression in function of the labeled adjacency matrix of the induced subgraphs of $G$. We show in Section \ref{sec:connected} how $\mathsf P(z)$ can be expressd in function of connected induced subgraphs only, and the resulting relation linking the Hamiltonian paths of the graph to its connected dominating sets.

\section{The generating series of simple paths}\label{sec:gen}

Let $G = (V,E)$ be a directed graph with vertex set $V = \{ 1,...,N \}$ and edge set $E \subseteq V^2 $, which may contain self-loops. The directed edge, or \textit{arc}, from a vertex $i$ to a vertex $j$ is labeled $\omega_{ij}$. A path $p$ of length $\ell \geq 1$ is a sequence of $\ell$ contiguous arcs, that is, such that each new arc starts where the previous ended, e.g. $p= \omega_{i i_1} \omega_{i_1 i_2} ... \omega_{i_{\ell -1} j}$. Paths appear naturally through analytical transformations of the labeled adjacency matrix $\mathsf W$, with general term $\mathsf W_{ij} = \omega_{ij}$ if $(i,j) \in E$ and $\mathsf W_{ij} =0$ otherwise. Precisely, paths of a given length $k \geq 1$ are enumerated in the $k$-th power of $\mathsf W$:
\begin{equation} \label{wij} (\mathsf W^k)_{ij} = \sum_{\substack{p:\, i \to j \\ \ell(p) = k}} p \,,\quad i,j=1,...,N, \end{equation}
where the sum runs over all paths $p$ of length $\ell(p)=k$ from $i$ to $j$ on $G$. Replacing $\mathsf W$ by the (non-labeled) adjacency matrix $\mathsf A$, $ (\mathsf A^k)_{ij}$ simply counts the number of paths of length $k$ from $i$ to $j$.\\

A path $p= \omega_{i i_1} \omega_{i_1 i_2} ... \omega_{i_{\ell -1} j}$ is \textit{open} if its end vertices $i,j$ are different and \textit{closed} otherwise. A closed path is also called a \textit{cycle}. An edge $\omega_{ij}$ is a path of length one from $i$ to $j$ while self-loops $\omega_{ii}$ and backtracks $\omega_{ij} \omega_{ji}$ are cycles of length one and two respectively. By convention, the empty path $1$ is considered a cycle of zero length. Simple paths are paths that do not visit the same vertex more than once. Letting $\ell(p)$ denote the length of a path $p$, $V(p)$ the set of its vertices and $|V(p)|$ its size, simple paths can be characterized as the non-empty paths $p$ such that $\ell(p) = |V(p)|-1$ is $p$ is simple and $\ell(p) = |V(p)|$ if $p$ is closed. \\

In the literature, variants of the inclusion-exclusion principle led to discovering exact formulas for counting simple paths and cycles on graphs.  Exact formulas for small length paths  \cite{Ross1952,Harary1971,Alon1997} were later extended to paths of arbitrary length in \cite{Khomenko1972,Perepechko2009}. Surprisingly, these complicated general expressions somewhat simplify when focusing on Hamiltonian paths using a matrix form of the inclusion-exclusion principle, see e.g.~the formulas for $H_{n-1}$ and $H_n$ in \cite{bax1993inclusion}. In this spirit, a concise formula can be derived by considering the matrix $\mathsf P(z)$ whose $(i,j)$-entry is defined to be the ordinary generating function of simple paths from $i$ to $j$, i.e.
$$ 
\mathsf{P}_{ij}(z) = \sum_{\substack{p:\, i \to j \\ p \text{ simple} }} p\, z^{\ell(p)}, 
$$
for $z$ a formal variable. The information relative to simple paths and simple cycles on the digraph is entirely summarized in $\mathsf P(z)$, making it a natural object of interest. Let $\mathcal S = 2^V \setminus \emptyset$ be the set of non-empty subsets of $V$ (including $V$). 
%The cardinality of $S \in \mathcal S$ is denoted by $|S|$. 
For a matrix $\mathsf M$ indexed by the vertices of the graph (typically, the adjacency matrix $\mathsf A$ or the labeled version $\mathsf W$), define the restriction $\mathsf M_S$ of $\mathsf M$ to  $S \in \mathcal S$ by  
$$ 
\mathsf{M}_{S,ij} = \begin{cases}
\mathsf M_{ij} & \text{if $i,j \in S$}, \\
0 & \text{otherwise},
\end{cases}\quad i,j=1,...,N. 
$$

Let $\Diag(\mathsf M)$ denote the diagonal matrix obtained by setting to zero all non-diagonal entries in a square matrix $\mathsf M$. We deal separately with the open and close paths in $\mathsf P(z)$ by writing
$$
\mathsf P(z) = \mathsf P_{\clo}(z) + \mathsf P_{\ope}(z),
$$ 
where $\mathsf P_{\clo}(z) := \Diag \big(\mathsf P(z) \big)$ is the matrix generating series of simple cycles and $\mathsf P_{\ope}(z) := \mathsf P(z) -\mathsf P_{\clo}(z) $ is the generating matrix of open simple paths.
\begin{proposition}\label{th:matrix} It holds
\begin{itemize}
	\item[i)] $ \displaystyle \mathsf{P}_{\ope}(z) = \sum_{\substack{S \in \mathcal S}} (z\mathsf{W}_S)^{|S|-1}  (\id - z \mathsf{W}_S)^{N-|S|},$
	\item[ii)] $ \displaystyle \mathsf{P}_{\clo}(z) =\sum_{\substack{S \in \mathcal S}}  \Diag \Big( (z\mathsf{W}_S)^{|S|}  (\id - z \mathsf{W}_S)^{N-|S|} \Big)$.
\end{itemize}
\end{proposition}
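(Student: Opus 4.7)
The plan is to expand the right-hand side as a formal power series in $z$ and identify the contribution of each individual walk. Starting with i), I would expand $(\id - z\mathsf W_S)^{N-|S|}$ by the binomial theorem (legitimate since $\id$ commutes with $\mathsf W_S$), so that each summand becomes an alternating linear combination of matrix powers $(z\mathsf W_S)^m$. By \eqref{wij} applied to the subgraph induced on $S$, the $(i,j)$-entry of $(z\mathsf W_S)^m$ is the generating series of walks from $i$ to $j$ of length $m$ whose vertices all lie in $S$. Exchanging the outer sum over $S$ with the sum over walks, I would then read off the coefficient on the RHS of a fixed walk $p$ of length $\ell$ with vertex set $T = V(p)$; it reduces to
$$
c(p) \;=\; \sum_{S\supseteq T}(-1)^{\ell-|S|+1}\binom{N-|S|}{\ell-|S|+1},
$$
with an analogous expression (without the ``$+1$'') for ii).

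The main step---and the expected main obstacle---is to prove $c(p) = 1$ when $p$ is a simple path and $c(p) = 0$ otherwise. Setting $t = |T|$ and regrouping the subsets $S \supseteq T$ by their cardinality, of which there are $\binom{N-t}{s-t}$ of size $s$, reduces $c(p)$ to a single alternating sum in $s$. The key computational ingredient is the Vandermonde-type factorization $\binom{N-t}{u}\binom{N-t-u}{\ell-t-u+1} = \binom{N-t}{\ell-t+1}\binom{\ell-t+1}{u}$, which pulls out the $s$-independent binomial and leaves the inner alternating sum collapsing to $(1-1)^{\ell-t+1}$. This vanishes unless $\ell = t - 1$, in which case it equals $1$.

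The walks surviving in i) are therefore exactly those with $|V(p)| = \ell(p) + 1$, i.e.\ the open simple paths; the equality cannot be met by a walk whose first and last vertices coincide, so the diagonal contribution vanishes automatically, in agreement with $\mathsf P_{\ope}$. For ii), repeating the same strategy with $(z\mathsf W_S)^{|S|}$ in place of $(z\mathsf W_S)^{|S|-1}$ yields $c(p) = 1$ exactly when $|V(p)| = \ell(p)$, and among \emph{closed} walks this condition picks out precisely the simple cycles; the outer $\Diag$ then discards the open contributions. The binomial expansion and walk-by-walk matching are routine; the conceptual crux is the alternating-sum identity that forces $c(p)$ to vanish except on simple paths.
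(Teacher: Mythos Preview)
Your proposal is correct and follows essentially the same inclusion--exclusion argument as the paper: both expand binomially, exchange the sum over $S$ with the sum over walks, and reduce everything to the per-walk coefficient $\sum_{S\supseteq V(p)}(-1)^{\ell(p)+1-|S|}\binom{N-|S|}{\ell(p)+1-|S|}$, which the paper asserts ``by direct calculation'' while you supply the explicit trinomial-revision step that collapses it to $0^{\ell-t+1}$. The only cosmetic difference is direction---the paper starts from the indicator identity and assembles the matrix series, whereas you start from the right-hand side and extract the indicator---but the content is identical.
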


\begin{proof} For any path $p$, it can be checked by direct calculation that
$$  \sum_{\substack{S \supseteq V(p)}} \binom{N- |S|}{\ell(p) +1 - |S|} (-1)^{\ell(p)+1 - |S|} = 
\begin{cases}1 & \text{if } \ell(p) = |V(p)|-1 \\ 
0 & \text{otherwise}
\end{cases} $$
with $\binom{n}{k}$ the binomial coefficient, set to zero for $k<0$ or $k>n$. Since $p$ is an open simple if, and only if $\ell(p) = V(p)-1$, the above expression provides an indicator function for open simple paths (in particular it is zero if $p$ is a cycle). Summing over all paths $p$ from $i$ to $j$ of fixed length $k >0$, then permuting the sums yields
$$ \sum_{\substack{p: i \to j \\ \ell(p) = k}} \!\! \mathds 1 \{ p \ \text{simple} \}  p= \sum_{\substack{S \in \mathcal S}}  \binom{N- |S|}{k+1 - |S|}   (-1)^{k+1 - |S|}\sum_{\substack{p: i \to j \\ \ell(p) = k \\ V(p) \subseteq S}} p. $$
The right-most sum recovers the $(i,j)$-entry of the $k$-th power of $\mathsf W_S$ by \eqref{wij}, which is trivially zero whenever $(i,j) \nsubseteq S$. The generating series of open simple paths follows by summing over all $k >0$ and once again permuting the summations:
$$ \mathsf P_{\ope} (z) = \sum_{\substack{S \in \mathcal S}}  \big(z\mathsf W_S \big)^{|S| -1}  \sum_{k =|S| }^{N-1} \binom{N- |S|}{k+1 - |S|} \big( -z \mathsf W_S \big)^{k+1 - |S|}. $$
%where we used that $(-1)^{k+1 - |S|}  (z\mathsf W_S)^k = (z\mathsf W_S)^{|S| -1} ( -z \mathsf W_S )^{k+1 - |S|}$. 
We conclude by the binomial formula. The proof of $ii)$ is similar noting that for a non-empty cycle $c$,
$$ \sum_{\substack{S \supseteq V(c)}}\binom{N- |S|}{\ell(c) - |S|}  (-1)^{\ell(c) - |S|} = \mathds 1 \{ \ell(c) = |V(c)| \} = \mathds 1 \{ c \ \text{simple} \},  $$
and focusing on the diagonal terms. \qed 
\end{proof}

\begin{remark} Attributing the value $\omega_{ij}=1$ to all directed edges (thus replacing $\mathsf W$ by $\mathsf A$) in the expression of $\mathsf P_{\clo}(z)$ recovers with little work the formula in \cite{Perepechko2009,PVRussianArticle} on the number of simple cycles of length $k> 2$ on undirected graphs, namely
\begin{align*} \frac{1}{2k} \sum_{i=0}^k (-1)^{k-i}\binom{N-i}{N-k}\sum_{S:|S|=N-i}\tr \big( \mathsf A_S^k \big),
\end{align*}
where  $\tr(.)$ is the trace operator. Remark that, non-oriented cycles of length $k>2$ are counted twice (once in each direction) when viewing an undirected graph as a bi-directed digraph, which explains the factor $1/2$ in the above expression. One may even interpret the above sum as an enumeration of the simple cycles of length $k>2$ from every possible starting vertex, thus requiring a normalization of $1/2k$. Nevertheless, the simple proof of Perepechko and Voropaev's formula from Proposition \ref{th:matrix} remains valid on directed and weighted graphs.
\end{remark}

\section{Counting simple paths from weakly connected sets}\label{sec:connected}

A digraph is said to be weakly connected if replacing all its directed edges by undirected edges produces a connected undirected graph. The expression of $\mathsf P(z)$ can be reduced to a sum over weakly connected  induced subgraphs of $G$ owing to the simple property that the adjacency matrix of a disconnected digraph can be made block diagonal by an appropriate permutation of its indices. Let $G(S)$ denote the subgraph of $G$ induced by $S \in \mathcal S$. For all $S \in \mathcal S$, there is a unique partition $\mathcal C(S) = \{C_1,...,C_k \}$ dividing $G(S)$ into weakly connected components such that $G(S) = G(C_1) \cup ... \cup G(C_k)$. This partition verifies for all $n \geq 1$,
\begin{equation} \label{partition} \mathsf{W}_S^n = \mathsf{W}_{C_1}^n + \hdots + \mathsf{W}_{C_k}^n. \end{equation}
Let $\mathcal C = \mathcal C(V)\subseteq \mathcal S$ denote the non-empty subsets of $V$ for which the resulting induced subgraphs are weakly connected. For $C \in \mathcal C$, the weak neighborhood $N(C)$ of $C$ in $G$ is the set of vertices in $V \setminus C$ that can reach and/or be reached from $C$ in one step. Formally,
$$ N(C) = \{ i \in V \setminus C: \exists j \in C, (i,j) \in E \text{ and/or } (j,i) \in E \}.  $$
Of course, this definition recovers the classical definition of neighborhood in undirected graphs. 
\begin{theorem}\label{th:matrix_connected} The matrix generating series of open and closed simple paths verify:
\begin{itemize}
	\item[i)] $ \displaystyle \mathsf{P}_{\ope}(z) = \sum_{C \in \mathcal C} (z\mathsf{W}_C)^{|C|-1}  (\id - z \mathsf{W}_C)^{|N(C)|}$,
	\item[ii)] $ \displaystyle \mathsf{P}_{\clo}(z) = \sum_{C \in \mathcal C} \Diag \left( (z\mathsf{W}_C)^{|C|}  (\id - z \mathsf{W}_C)^{|N(C)|} \right)$.
\end{itemize}
\end{theorem}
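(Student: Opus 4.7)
My plan is to derive Theorem \ref{th:matrix_connected} from Proposition \ref{th:matrix} by regrouping the sum over $S\in\mathcal{S}$ according to the weakly connected components of each induced subgraph $G(S)$, and then exchanging the order of summation. Write $\mathcal{C}(S)=\{C_1,\ldots,C_k\}$. Because the vertex sets $C_i$ are pairwise disjoint, the matrices $\mathsf{W}_{C_i}$ have disjoint row and column supports, so $\mathsf{W}_{C_i}\mathsf{W}_{C_j}=0$ whenever $i\neq j$. This orthogonality supplies two identities I will lean on: equation \eqref{partition} for powers, and
$$
\id - z\mathsf{W}_S \;=\; \prod_{i=1}^{k}\bigl(\id - z\mathsf{W}_{C_i}\bigr),
$$
whose factors commute. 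An immediate consequence is $\mathsf{W}_{C_i}(\id - z\mathsf{W}_{C_j})^m=\mathsf{W}_{C_i}$ for $i\neq j$, since every positive-order term in the binomial expansion is killed by $\mathsf{W}_{C_i}\mathsf{W}_{C_j}=0$.

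Combining these, I rewrite
$$
(z\mathsf{W}_S)^{|S|-1}(\id - z\mathsf{W}_S)^{N-|S|} \;=\; \sum_{C\in\mathcal{C}(S)}(z\mathsf{W}_C)^{|S|-1}(\id - z\mathsf{W}_C)^{N-|S|},
$$
since the cross factors $(\id - z\mathsf{W}_{C_j})^{N-|S|}$ for $j\neq i$ collapse to the identity when multiplied by $(z\mathsf{W}_{C_i})^{|S|-1}$. Substituting into $i)$ of Proposition \ref{th:matrix} and swapping sums, each $C\in\mathcal{C}$ is visited once for every $S\in\mathcal{S}$ in which $C$ appears as a weakly connected component. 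Such an $S$ must contain $C$ and be disjoint from $N(C)$, so $S=C\cup T$ with $T$ ranging over subsets of $V\setminus(C\cup N(C))$, a set of cardinality $r:=N-|C|-|N(C)|$. Grouping by $t:=|T|$, the total contribution of $C$ is
$$
(z\mathsf{W}_C)^{|C|-1}\sum_{t=0}^{r}\binom{r}{t}(z\mathsf{W}_C)^{t}(\id - z\mathsf{W}_C)^{|N(C)|+r-t},
$$
which by the binomial theorem telescopes to $(z\mathsf{W}_C)^{|C|-1}(\id - z\mathsf{W}_C)^{|N(C)|}$, exactly the summand in $i)$. The closed case $ii)$ proceeds identically, with the exponent $|S|-1$ replaced by $|S|$ and $\Diag$ pulled outside the sum by linearity.

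The step I expect to require most care is the block-orthogonality splitting: one must verify that $(z\mathsf{W}_S)^{|S|-1}(\id - z\mathsf{W}_S)^{N-|S|}$ decomposes cleanly as a sum indexed by the components of $G(S)$, with no residual cross terms surviving from the expansion of the product $\prod_j(\id - z\mathsf{W}_{C_j})^{N-|S|}$. Once that splitting is in hand, the re-indexing by $C\in\mathcal{C}$, the characterization of the $S$ containing $C$ as a component, and the binomial collapse are routine bookkeeping.
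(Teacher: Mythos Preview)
Your proposal is correct and follows essentially the same route as the paper's proof: split each summand of Proposition~\ref{th:matrix} over the weakly connected components of $G(S)$, swap the order of summation, identify the $S$ having $C$ as a component as $S=C\cup T$ with $T\subseteq V\setminus(C\cup N(C))$, and collapse the resulting sum via the binomial theorem. Your justification of the splitting $(z\mathsf{W}_S)^{|S|-1}(\id - z\mathsf{W}_S)^{N-|S|}=\sum_{C\in\mathcal C(S)}(z\mathsf{W}_C)^{|S|-1}(\id - z\mathsf{W}_C)^{N-|S|}$ via the factorization $\id - z\mathsf{W}_S=\prod_i(\id - z\mathsf{W}_{C_i})$ and the orthogonality $\mathsf{W}_{C_i}\mathsf{W}_{C_j}=0$ is in fact more explicit than the paper's, which simply invokes \eqref{partition}; just note that the collapse of the cross factors uses $|S|-1\geq 1$, which is automatic whenever $G(S)$ has more than one component.
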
 

\begin{proof} Combining Theorem \ref{th:matrix} and Equation \eqref{partition} gives after permuting the sums
\begin{align*} \mathsf{P}_{\ope}(z)  
%= \sum_{S \in \mathcal S} \sum_{C \in \mathcal C(S)} (z\mathsf{W}_C)^{|S|-1}  (\id - z \mathsf{W}_C)^{N-|S|} \\
 =   \sum_{C \in \mathcal C} \sum_{S: C \in  \mathcal C(S)} (z\mathsf{W}_C)^{|S|-1}  (\id - z \mathsf{W}_C)^{N-|S|} \end{align*}
Fix $C \in \mathcal C$. A set $S \in \mathcal S$ such that $G(C)$ is a weakly connected component of $G(S)$ writes as $S = C \cup T$ for $T \subseteq V \setminus (C \cup N(C))$. Thus,
\begin{eqnarray*} 
& & \sum_{S: C \in  \mathcal C(S)} (z\mathsf{W}_C)^{|S|-1} (\id - z \mathsf{W}_C)^{N-|S|} \\
&  =& \!\!\! \sum_{T \subseteq V \setminus (C \cup N(C))} \hspace*{-0.5cm} (z\mathsf{W}_C)^{|C \cup T|-1}  (\id - z\mathsf{W}_{C})^{N - |C \cup T|} \\
& = & (z\mathsf{W}_C)^{|C|-1}  (\id - z\mathsf{W}_{C})^{|N(C)|} \hspace*{-0.5cm} \sum_{T \subseteq V \setminus (C \cup N(C))} \hspace*{-0.5cm} (z\mathsf{W}_C)^{|T|}  (\id - z\mathsf{W}_{C})^{N - |C| - |N(C)| - |T|}.
\end{eqnarray*}
Let $k = N - |C| - |N(C)|$, remark that
\begin{eqnarray*} \sum_{T \subseteq V \setminus (C \cup N(C))} \hspace*{-0.5cm} (z\mathsf{W}_C)^{|T|}  (\id - z\mathsf{W}_{C})^{k - |T|} = \sum_{j=0}^k \binom{k}{j} (z\mathsf{W}_C)^{j}  (\id - z\mathsf{W}_{C})^{k-j} = \id.
\end{eqnarray*}
Thus,
$$ \mathsf P_{\ope}(z) = \sum_{S \in \mathcal S} (z\mathsf{W}_S)^{|S|-1}  (\id - z \mathsf{W}_S)^{N-|S|} = \sum_{C \in \mathcal C}  (z\mathsf{W}_C)^{|C|-1}  (\id - z \mathsf{W}_C)^{|N(C)|}.  $$
The proof for $\mathsf P_{\clo}(z)$ is identical. \qed 
\end{proof}

From a computational point of view, the restriction to weakly connected induced subgraphs provides an improvement for counting simple paths if the graph $G$ contains relatively few connected induced subgraphs, e.g. if the graph is sparse. More precisely, it has been shown in \cite{giscard2016general}, that an algorithm based on the formulas of Theorem~\ref{th:matrix_connected} for counting simple cycles and paths of length up to $\ell$ achieves an asymptotic running time of 
$O\left(N+M+\big(\ell^\omega+\ell\Delta\big) |S_\ell|\right)$ and uses $O(N+M)$ space. In this expression,  $N$ is the number of vertices of the graph, $M$ is the number of edges,  $|S_\ell|$ is the number of (weakly) connected induced subgraphs of $G$ on at most $\ell$ vertices, $\Delta$ is the maximum degree of any vertex and $\omega$ is the exponent of matrix multiplication. Extensive comparisons with all existing techniques for counting simple cycles and paths \cite{giscard2016general}, show that the formulas of Theorem~\ref{th:matrix_connected} yield the best general purpose algorithm for this task whenever $(\ell^{\omega-1}\Delta^{-1}+1) |S_\ell|\leq |\text{Cycle}_\ell|$, with $|\text{Cycle}_\ell|$ the total number of simple cycles of length at most $\ell$, including backtracks and self-loops \cite{giscard2016general}. When this condition is not met the best general purpose algorithm is brute force search.
 %Matlab and Python implementations of the algorithm computing the formulas of Theorem~\ref{th:matrix_connected} are available on the FileExchange \cite{MatlabFileExchange} and on GitHub \cite{Milani}.

In conjunction with Monte Carlo sampling, the algorithm relying on Theorem~\ref{th:matrix_connected} has already permitted to count simple cycles of length up to 20 on 130,000+ vertices real-world networks \cite{giscard2016evaluating}. Furthermore, given that Theorem~\ref{th:matrix_connected} involves the labelled adjacency matrix $\mathsf{W}$, the formulas of the Theorem permit the \textit{enumeration} of the simple cycles and paths. By coding vertex labels using numerical values, 
this property was exploited to efficiently compare all label sequences corresponding to simple cycles in pairs of graphs, thereby reducing an important hurdle in automatic graph classification tasks \cite{giscard2017kernel}. \\

Let us now discuss the implications of this result on the Hamiltonian path problem. Remark that the terms of maximal degree in $\mathsf P(z)$ only involve  weakly connected sets $C$ for which $|C| + |N(C)| =N$, i.e. dominating sets. The reduction of the Hamiltonian path problem to dominating weakly connected sets has been investigated  in \cite{bjorklund2008travelling}, Theorem 3, where it proved to be a computational improvement for bounded degree graphs. In the sequel, let $\mathsf H$ be the Hamiltonian path counting matrix, whose $(i,j)$-entry gives the number of Hamiltonian paths from $i$ to $j$.

\begin{corollary}\label{cor:hamil} Let $\mathcal D$ be the set of weakly connected dominating sets in $G$,
$$ \mathsf H = \sum_{D \in \mathcal D} (-1)^{N-|D|} \Big( \mathsf{A}_D^{N-1} + \frac{\tr \big(\mathsf{A}_D^{N} \big)}{N} \ \id \Big). $$
\end{corollary}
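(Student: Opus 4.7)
The plan is to deduce Corollary~\ref{cor:hamil} from Theorem~\ref{th:matrix_connected} by specialising $\mathsf W = \mathsf A$ and isolating the top-degree coefficients of the generating series. Since a Hamiltonian path has length $N-1$ and a Hamiltonian cycle has length $N$, I would decompose $\mathsf H = \mathsf H_{\ope} + \mathsf H_{\clo}$ with $\mathsf H_{\ope} := [z^{N-1}]\mathsf P_{\ope}(z)$ recording open Hamiltonian paths (off-diagonal) and $\mathsf H_{\clo} := [z^{N}]\mathsf P_{\clo}(z)$ recording Hamiltonian cycles based at each vertex (diagonal).

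The first step is a degree count. For $C \in \mathcal C$, the matrix polynomial $(z\mathsf A_C)^{|C|-1}(\id - z\mathsf A_C)^{|N(C)|}$ has formal degree $(|C|-1)+|N(C)|$ in $z$, which is at most $N-1$, with equality precisely when $|C|+|N(C)|=N$, i.e.\ when $C$ is a dominating set. Non-dominating sets therefore contribute nothing to $[z^{N-1}]\mathsf P_{\ope}(z)$, and the analogous bound rules them out of $[z^{N}]\mathsf P_{\clo}(z)$. For $D \in \mathcal D$, keeping only the $j = N-|D|$ term in the binomial expansion of $(\id - z\mathsf A_D)^{N-|D|}$ yields
$$ [z^{N-1}](z\mathsf A_D)^{|D|-1}(\id - z\mathsf A_D)^{N-|D|} = (-1)^{N-|D|}\mathsf A_D^{N-1}, $$
and an identical computation gives $(-1)^{N-|D|}\mathsf A_D^{N}$ for the closed variant. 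Summing over $D \in \mathcal D$:
$$ \mathsf H = \sum_{D \in \mathcal D}(-1)^{N-|D|}\bigl(\mathsf A_D^{N-1} + \Diag(\mathsf A_D^{N})\bigr). $$

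The remaining and principal obstacle is to recognise that $\sum_{D}(-1)^{N-|D|}\Diag(\mathsf A_D^N)$ collapses to the scalar matrix $\frac{1}{N}\sum_{D}(-1)^{N-|D|}\tr(\mathsf A_D^N)\,\id$, an identity not visible term by term since each individual $\Diag(\mathsf A_D^N)$ need not be proportional to $\id$. The argument I have in mind is combinatorial rather than algebraic: the diagonal entry $(\mathsf H_{\clo})_{ii}$ counts directed Hamiltonian cycles based at vertex $i$, and since every such cycle passes through every vertex exactly once, a canonical cyclic rotation provides an explicit bijection between the cycles based at $i$ and those based at $j$. Hence $\mathsf H_{\clo}$ is a scalar multiple of $\id$, which forces $\mathsf H_{\clo} = \frac{1}{N}\tr(\mathsf H_{\clo})\,\id$; substituting into the display above delivers the stated formula.
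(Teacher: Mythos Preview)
Your argument is correct and follows essentially the same route as the paper: specialise $\mathsf W=\mathsf A$ in Theorem~\ref{th:matrix_connected}, observe that only weakly connected dominating sets contribute to the top-degree coefficients since $|C|+|N(C)|=N$ characterises domination, and then use the fact that every Hamiltonian cycle visits all vertices to conclude that the diagonal entries of $\mathsf H_{\clo}$ are equal. Your write-up is more explicit about the bijection via cyclic rotation and about why individual terms $\Diag(\mathsf A_D^N)$ need not be scalar, but the underlying idea is identical to the paper's.
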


 \begin{proof} Take $\mathsf W = \mathsf A$ in Theorem \ref{th:matrix_connected} and isolate the term of maximal degree. In $\mathsf P_{\ope}(z)$, this term writes
$$ \sum_{D \in \mathcal D} (-1)^{N-|D|} \mathsf{A}_D^{N-1}, $$
where we used that $|N(D)|=N-|D|$ due to the dominating property. This provides the off-diagonal part of $\mathsf H$. Its diagonal part follows similarly, noting that since simple cycles of length $N$ visit every vertex in the graph, all diagonal terms of maximal degree in $\mathsf P_\clo(z)$ are equal. \qed 
\end{proof}

A quick inspection of the proof reveals that, similarly as for $\mathsf P(z)$, open and closed Hamiltonian paths are dealt with separately yielding a slightly stronger version of the result, namely
$$ \mathsf H_{\ope} =  \sum_{D \in \mathcal D} (-1)^{N-|D|} \mathsf{A}_D^{N-1} \ \ \text{ and } \ \ \mathsf H_\clo = \frac 1 N \sum_{D \in \mathcal D} (-1)^{N-|D|} \tr \big(\mathsf{A}_D^{N} \big) \ \id,$$
recovering the matrices $H_{n-1}$ and $H_n$ in \cite{bax1993inclusion} with the summation restricted to connected dominating sets.

\section*{Acknowledgements}
P.-L. Giscard is grateful for the financial support from the Royal Commission for
the Exhibition of 1851. The authors are grateful to an anonymous referee for its many constructive remarks that helped improved the paper.

%\bibliographystyle{plain}
 %\bibliography{PXperm}  

\end{document}